\newtheorem{theorem}{Theorem}[section]
\newtheorem{definition}[theorem]{Definition}
\newtheorem{proposition}[theorem]{Proposition}
\numberwithin{equation}{section}
\newcommand{\R}{\mathbb{R}}
\newcommand{\N}{\mathbb{N}}
\newcommand{\Z}{\mathbb{Z}}
\newcommand{\C}{\mathbb{C}}
\newcommand{\dis}{\displaystyle}
\newcommand{\norm}[1]{\left\Vert#1\right\Vert}
\newcommand{\eps}{\varepsilon}
\begin{document}

\title[The multivariate integer Chebyshev problem]
{The multivariate integer Chebyshev problem}%
\author{P. B. Borwein and I. E. Pritsker}%

\thanks{Research of P. B. Borwein was partially supported by NSERC of Canada and MITACS}
\thanks{Research of I. E. Pritsker was partially supported by the National Security Agency under grant H98230-06-1-0055, and by the Alexander von Humboldt Foundation.}

\address{Department of Mathematics and Statistics, Simon Fraser
University, Burnaby, B. C., V5A 1S6, Canada}%
\email{pborwein@cecm.sfu.ca}

\address{Department of Mathematics, Oklahoma State University,
Stillwater, OK 74078, U.S.A.}%
\email{igor@math.okstate.edu}

\subjclass[2000]{Primary 11C08; Secondary 11G50, 30C10}%
\keywords{Multivariate polynomials, integer coefficients,
Chebyshev polynomials.}%



\begin{abstract}

The multivariate integer Chebyshev problem is to find polynomials with integer coefficients that minimize the supremum norm over a compact set in $\C^d.$ We study this problem on general sets, but devote special attention to product sets such as cube and polydisk. We also establish a multivariate analog of the Hilbert-Fekete upper bound for the integer Chebyshev constant, which depends on the dimension of space. In the case of single variable polynomials in the complex plane, our estimate coincides with the Hilbert-Fekete result.

\end{abstract}

\maketitle


\section{The integer Chebyshev problem and its multivariate counterpart}

The supremum norm on a compact set  $E \subset {\C}^d,\ d\in\N,$ is
defined by
\[
\norm{f}_{E} := \sup_{z \in E} |f(z)|.
\]
We study polynomials with integer coefficients that minimize the sup norm on a set $E$, and investigate their asymptotic behavior. The univariate case $(d=1)$ has a long history, but the problem is virtually untouched for $d\ge 2.$ Let ${\mathcal P}_n ( {\C})$ and ${\mathcal P}_n ( {\Z})$ be the classes of algebraic polynomials in one variable, of degree at most $n$, respectively with complex and with integer coefficients. The problem of minimizing the uniform norm on $E$ by {\it monic} polynomials from ${\mathcal P}_n ( {\C})$ is the classical {\em Chebyshev problem} (see \cite{BE95}, \cite{Ri90}, \cite{Ts75}, etc.) For $E=[-1,1]$, the explicit solution of this problem is given by the monic Chebyshev polynomial of degree $n$:
\[
T_n (x) := 2^{1 -n} \cos (n \arccos x), \quad n \in \N.
\]
By a linear change of variable, we immediately obtain that
\[
t_n (x) := \left( \frac{b -a}{2} \right)^n T_n \left( \frac{2x -a
-b}{b -a} \right)
\]
is a monic polynomial with real coefficients and the smallest norm
on $[a,b]\subset {\R}$ among all {\it monic} polynomials of degree
$n$ from ${\mathcal P}_n ({\C})$. In fact,
\begin{equation} \label{1.1}
\| t_n \|_{[a,b]} = 2 \left( \frac{b -a}{4} \right)^n, \quad n \in
{\N},
\end{equation}
and the {\it Chebyshev constant} for $[a,b]$ is given by
\begin{equation} \label{1.2}
t_{\C}([a,b]) := \lim_{n \rightarrow \infty} \| t_n \|_{[a,b]}^{1/n}
= \frac{b -a}{4}.
\end{equation}
The Chebyshev constant of an arbitrary compact set $E \subset {\C}$
is defined similarly:
\begin{equation} \label{1.3}
t_{\C}(E) := \lim_{n \rightarrow \infty} \| t_n \|_{E}^{1/n},
\end{equation}
where $t_n$ is the Chebyshev polynomial of degree $n$ on $E$. It is
known that $t_{\C}(E)$ is equal to the transfinite diameter and the
logarithmic capacity ${\rm cap}(E)$ of the set $E$ (cf. \cite{Ra95} and \cite[pp. 71-75]{Ts75} for definitions and background material).

We say that $Q_n \in {\mathcal P}_n ( {\Z})$ is an {\it integer
Chebyshev polynomial} for a compact set $E \subset \C$ if
\begin{equation} \label{1.5}
\| Q_n \|_{E} = \inf_{0 \not\equiv P_n \in{\mathcal P}_n ( {\Z})} \|
P_n \|_{E},
\end{equation}
where the $\inf$ is taken over all polynomials from ${\mathcal P}_n
( {\Z})$ that are not identically zero. The {\it integer Chebyshev
constant} (or integer transfinite diameter) for $E$ is given by
\begin{equation} \label{1.6}
t_{\Z}(E) := \lim_{n \rightarrow \infty} \| Q_n \|_{E}^{1/n}.
\end{equation}
In general, $0\le t_{\Z}(E)\le 1$, because $P_n(z)\equiv 1$ is one
of the competing polynomials for the $\inf$ in \eqref{1.5}. One may
readily observe that if $E=[a,b]$ and $b -a \geq 4$, then $Q_n (x)
\equiv 1,\ n \in {\N},$ by (\ref{1.1}) and (\ref{1.6}), so that
\begin{equation} \label{1.7}
t_{\Z}([a,b]) = 1, \quad b-a \ge 4.
\end{equation}
We also obtain directly from the definition and (\ref{1.2}) that
\begin{equation} \label{1.8}
\frac{b -a}{4} = t_{\C}([a,b]) \leq t_{\Z}([a,b]),\quad b-a<4.
\end{equation}
The results of Hilbert \cite{Hi1894} imply the important upper bound
\begin{equation} \label{1.9}
t_{\Z}([a,b]) \leq \sqrt{\frac{b-a}{4}}.
\end{equation}
These results were generalized to the case of an arbitrary compact
set $E \subset \C$ by Fekete \cite{Fe23}, who developed a new
analytic setting for the problem, by introducing the transfinite
diameter of $E$ and showing that it is equal to $t_{\C}(E).$ Both,
the transfinite diameter and the Chebyshev constant, were later
proved to be equal to the logarithmic capacity ${\rm cap}(E)$, by
Szeg\H{o} \cite{Sz24}. Therefore we state the result of Fekete as
follows:
\begin{equation} \label{1.10}
t_{\Z}(E) \leq \sqrt{t_{\C}(E)} = \sqrt{{\rm cap}(E)},
\end{equation}
where $E$ is $\R$-symmetric. It contains Hilbert's estimate
(\ref{1.9}) as a special case, since $t_{\C}([a,b])=(b-a)/4$ by
(\ref{1.2}). The following useful observation on the asymptotic
sharpness for the estimates (\ref{1.9}) and (\ref{1.10}) is due to
Trigub \cite{Tr71}. For the sequence of the intervals
$I_m:=[1/(m+4),1/m]$, we have $ t_{\Z}(I_m) \ge \frac{1}{m+2}$ and
\[
\lim_{m \to \infty} \frac{t_{\Z}(I_m)}{\sqrt{|I_m|/4}} = 1.
\]
Furthermore, it was shown in \cite{Pr1} that, for the circle
$L_{1/n}=\{z:|nz-1|=1/n\}, \ n\in\N,\ n\ge2$, we have
$t_{\Z}(L_{1/n})=1/n$ and $t_{\C}(L_{1/n})=1/n^2$. Hence {\it
equality} holds in \eqref{1.10} in this case.

The majority of lower bounds for the integer Chebyshev constant are obtained for intervals by using the resultant method, see \cite{Mo94,BE96,FRS97}. They heavily depend on the arithmetic properties of endpoints of the interval. Different methods based on weighted potential theory are employed in \cite{Pr1}. We note that the exact values of $t_{\Z}$ are not known for any segment of length less than $4$. On the other hand, close upper and lower bounds are available for many intervals, with $[0,1]$ being the most thoroughly studied.

Even the classical Chebyshev problem for multivariate polynomials is
considerably more complicated than its univariate version.
Concerning the multivariate integer Chebyshev problem, very little
is known at all. But some special cases of small integer polynomials
in many variables were certainly studied before. For example, this
problem received attention in light of the Gelfond-Schnirelman
method in the distribution of prime numbers (see Gelfond's comments
in \cite[pp. 285--288]{Cheb44}, and see \cite{Na82,Ch83,Pr2} for
further developments).

By ${\mathcal P}_n^d({\C})$ and ${\mathcal P}_n^d({\Z})$, we denote
the classes of algebraic polynomials in $d$ variables, of degree at
most $n\in\N_0:=\N\cup\{0\}$, respectively with complex and with
integer coefficients. The general form of such polynomials is as
follows:
\[
P_n(z)=\sum_{|k|\le n} a_k z^k, \quad z\in\C^d,
\]
where $k=(k_1,\ldots,k_d)\in \N_0^d,\ z^k=z_1^{k_1}\ldots
z_d^{k_d},$ and $|k|=\sum_{i=1}^d k_i.$

\begin{definition} \label{def1.1}
A multivariate integer Chebyshev polynomial $C_n\in{\mathcal
P}_n^d({\Z})$ for a compact set $E\subset\C^d$ is defined by
\[
\|C_n\|_E=\inf_{0 \not\equiv P_n \in{\mathcal P}_n^d({\Z})} \| P_n
\|_{E}
\]
We write $t_{\Z}(n,E):=\|C_n\|_E$, and define the multivariate
integer Chebyshev constant for $E$ as
\[
t_{\Z}(E) := \lim_{n \rightarrow \infty} \| C_n \|_{E}^{1/n}.
\]
\end{definition}

The multivariate integer Chebyshev constant is a monotone and
continuous set function, which is consistent with the classical
one-dimensional version.

\begin{proposition} \label{prop1.2}
Let $E\subset \C^d$ and $F \subset \C^d$ be compact sets.\\
{\rm (i)} If $E\subset F$ then $t_{\Z}(n,E) \le t_{\Z}(n,F),\
n\in\N_0,$
and $t_{\Z}(E) \le t_{\Z}(F).$\\
{\rm (ii)} Define $ E_{\delta}:=\bigcup_{w \in E} \{ z: |z-w|\le \delta\},$ where $|z-w|$ is the Euclidean distance in $\C^d$ (as $\R^{2d}$).
For any $\eps>0$ we can find $\delta>0$ such that
\[
0 \le t_{\Z}(E_{\delta}) - t_{\Z}(E) \le \eps.
\]
\end{proposition}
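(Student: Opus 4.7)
For part (i), I would note that $E\subset F$ implies $\|P\|_E\le\|P\|_F$ for every polynomial $P$. Taking the infimum over nonzero $P\in\mathcal{P}_n^d(\Z)$ in Definition~\ref{def1.1} yields $t_{\Z}(n,E)\le t_{\Z}(n,F)$; taking $n$-th roots and sending $n\to\infty$ gives $t_{\Z}(E)\le t_{\Z}(F)$. Applying this to $E\subset E_\delta$ already establishes the lower bound $t_{\Z}(E)\le t_{\Z}(E_\delta)$ in part (ii).

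For the upper bound in (ii), I would first record the submultiplicativity
\[
t_{\Z}(m+n,E)\le t_{\Z}(m,E)\,t_{\Z}(n,E),
\]
which holds because the product of a nonzero polynomial in $\mathcal{P}_m^d(\Z)$ with a nonzero polynomial in $\mathcal{P}_n^d(\Z)$ lies in $\mathcal{P}_{m+n}^d(\Z)\setminus\{0\}$ and the sup norm is submultiplicative. Fekete's lemma for subadditive sequences then identifies the limit defining $t_{\Z}(E)$ as an infimum,
\[
t_{\Z}(E)=\inf_{n\ge 1}t_{\Z}(n,E)^{1/n}.
\]

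Given $\eps>0$, I would choose $n_0\in\N$ and a nonzero $P\in\mathcal{P}_{n_0}^d(\Z)$ with $\|P\|_E^{1/n_0}<t_{\Z}(E)+\eps/2$. Since $P$ is uniformly continuous on the bounded set $E_1$ and $\bigcap_{\delta>0}E_\delta=E$ for compact $E$, the function $\delta\mapsto\|P\|_{E_\delta}$ is right-continuous at $0$; hence for all sufficiently small $\delta>0$,
\[
\|P\|_{E_\delta}^{1/n_0}<t_{\Z}(E)+\eps.
\]
Combining this with the infimum characterization, I obtain
\[
t_{\Z}(E_\delta)\le t_{\Z}(n_0,E_\delta)^{1/n_0}\le\|P\|_{E_\delta}^{1/n_0}<t_{\Z}(E)+\eps,
\]
which is the required upper bound.

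The principal obstacle I anticipate is the infimum characterization provided by Fekete's lemma. Without it, $t_{\Z}(E_\delta)$ would be an asymptotic quantity controlled by an entire sequence of Chebyshev polynomials on the perturbation, whereas the only explicit information I have is a single near-optimal polynomial for $E$. Once the lemma reduces the question to a single fixed $P$ of a single fixed degree $n_0$, the remainder of the argument is routine uniform continuity of $P$ on a compact neighborhood of $E$.
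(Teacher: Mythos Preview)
Your proof is correct and follows essentially the same strategy as the paper: part~(i) is identical, and for part~(ii) both arguments fix a single near-optimal integer polynomial $C_n$ for $E$ and then use submultiplicativity to pass from a bound on $\|C_n\|^{1/n}$ on a neighborhood of $E$ to a bound on $t_{\Z}$ there. The packaging differs slightly: where you invoke Fekete's lemma to write $t_{\Z}(E_\delta)=\inf_m t_{\Z}(m,E_\delta)^{1/m}$ and then use uniform continuity of $P$ to control $\|P\|_{E_\delta}$, the paper instead defines the sublevel sets $H=\{|C_n|^{1/n}\le t_{\Z}(E)+\eps/2\}\cap B_R$ and $W=\{|C_n|^{1/n}\le t_{\Z}(E)+\eps\}\cap B_{2R}$, takes $\delta=\mathrm{dist}(H,\partial W)$, and bounds $t_{\Z}(W)$ by considering the sequence of powers $(C_n)^m$. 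Your explicit appeal to Fekete's lemma is a clean way of saying what the paper does with the powers $(C_n)^m$, and your uniform-continuity argument avoids the sublevel-set geometry; conversely, the paper's construction makes the choice of $\delta$ completely explicit. Neither route gains anything substantive over the other.
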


Another property similar to the univariate case states that if the
set is sufficiently large, then the integer Chebyshev polynomials
are given by $C_n(z)\equiv 1,\ n\in\N_0.$ We also estimate the multivariate integer Chebyshev constant of $E$ by the integer Chebyshev constants of its coordinate projections.

\begin{proposition} \label{prop1.3}
Suppose that $E_j\subset \C,\ j=1,\ldots,d,$ are compact sets, and
define $E:=E_1\times\ldots\times E_d.$ We have
\[
t_{\Z}(E) \le \min_{1\le j\le d} t_{\Z}(E_j).
\]
If $t_{\C}(E_j)\ge 1,\ j=1,\ldots,d,$ then $C_n(z)\equiv 1,\
n\in\N_0,$ and $t_{\Z}(E)=1.$
\end{proposition}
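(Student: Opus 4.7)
The plan for part (i) is to lift a univariate integer Chebyshev polynomial to a multivariate one. Fix $j\in\{1,\ldots,d\}$ and let $Q_n\in{\mathcal P}_n(\Z)$ satisfy $\|Q_n\|_{E_j}=t_\Z(n,E_j)$. Setting $P_n(z_1,\ldots,z_d):=Q_n(z_j)$ produces a polynomial in ${\mathcal P}_n^d(\Z)$, since the total degree is preserved, and the product structure of $E$ gives
\[
\|P_n\|_E=\sup_{z_j\in E_j}|Q_n(z_j)|=t_\Z(n,E_j).
\]
Hence $t_\Z(n,E)\le t_\Z(n,E_j)$; extracting the $n$-th root and sending $n\to\infty$ yields the claimed inequality for every $j$, and the minimum follows.

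For part (ii), the plan is to show $\|P_n\|_E\ge 1$ for every nonzero $P_n\in{\mathcal P}_n^d(\Z)$ by peeling off one variable at a time. The univariate ingredient is the classical bound: for $p(z)=c_mz^m+\cdots$ with $c_m\ne 0$ on a compact set $F\subset\C$,
\[
\|p\|_F \ge |c_m|\,\|t_m\|_F \ge |c_m|\,t_\C(F)^m,
\]
where the second inequality is the familiar consequence of subadditivity of $\log\|t_m\|_F$. Under the hypothesis $t_\C(E_j)\ge 1$ and with $c_m\in\Z\setminus\{0\}$, this collapses to $\|p\|_{E_j}\ge 1$.

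Now expand
\[
P_n(z)=\sum_{m_1=0}^{n} z_1^{m_1}\,Q_{m_1}(z_2,\ldots,z_d),\qquad Q_{m_1}\in\Z[z_2,\ldots,z_d],
\]
and let $M_1$ be the largest index with $Q_{M_1}\not\equiv 0$. For each fixed $(z_2,\ldots,z_d)\in E_2\times\cdots\times E_d$, applying the univariate bound in $z_1$ to the resulting polynomial of degree at most $M_1$ gives
\[
\sup_{z_1\in E_1}|P_n(z_1,z_2,\ldots,z_d)|\ge |Q_{M_1}(z_2,\ldots,z_d)|\cdot t_\C(E_1)^{M_1}\ge |Q_{M_1}(z_2,\ldots,z_d)|.
\]
Taking the supremum over $(z_2,\ldots,z_d)$ yields $\|P_n\|_E\ge\|Q_{M_1}\|_{E_2\times\cdots\times E_d}$, where $Q_{M_1}$ is again a nonzero integer polynomial, now in one fewer variable. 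Iterating across $z_2,\ldots,z_d$ eventually leaves a nonzero integer constant, whose absolute value is at least $1$. Therefore $\|P_n\|_E\ge 1=\|1\|_E$, so $C_n\equiv 1$ is an integer Chebyshev polynomial and $t_\Z(E)=1$.

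The main delicate point I anticipate is bookkeeping in the induction: at each step I must ensure that the extracted leading coefficient is still a \emph{nonzero} polynomial with \emph{integer} coefficients in the remaining variables. This is immediate from expanding in the highest-degree monomial of the current variable, but it is precisely this persistence of the integer structure (together with $t_\C(E_j)\ge 1$ for the freshly attacked variable) that makes the recursive reduction go through.
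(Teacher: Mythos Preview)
Your proof of the first inequality is the paper's own: lift a univariate integer Chebyshev polynomial $Q_n$ for $E_j$ to an element of $\mathcal P_n^d(\Z)$ depending only on $z_j$, and compare norms.

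For the second assertion the paper takes a different route. It invokes the diameter bound $\mathrm{diam}(E_j)\ge 2\,t_\C(E_j)\ge 2$ to select points $\zeta_j\in E_j$ with $|\zeta_j|\ge 1$, substitutes $z_j=\zeta_j$ for $j=2,\ldots,d$ into $C_n$ in one shot, and argues that the resulting univariate polynomial in $z_1$ has leading coefficient of modulus at least $1$; the univariate Chebyshev lower bound on $E_1$ then yields $\|C_n\|_E\ge 1$. Your inductive peeling avoids both the diameter inequality and the choice of specific points: at each stage the inequality $\sup_{z_1\in E_1}|P_n|\ge |Q_{M_1}|\cdot t_\C(E_1)^{M_1}$ holds at \emph{every} $(z_2,\ldots,z_d)$ (trivially where $Q_{M_1}$ vanishes, and by the cited Chebyshev bound where it does not), so the issue of whether the leading coefficient survives a particular substitution never arises. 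The paper's argument is a single reduction to one variable but must contend with that nonvanishing question; your approach trades this for a short induction and is arguably more transparent. One cosmetic point: when you write ``polynomial of degree at most $M_1$'' you should note explicitly that if $Q_{M_1}(z_2,\ldots,z_d)=0$ the displayed inequality is vacuous, and otherwise the degree in $z_1$ is exactly $M_1$ so the cited bound applies.
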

Note that if $E\subset\C^d$ is arbitrary, then we have $E\subset E_1\times\ldots\times E_d,$ where $E_j$ is a projection of $E$ onto the $j$th coordinate plane. Hence the estimate $t_{\Z}(E) \le \min_{1\le j\le d} t_{\Z}(E_j)$ is valid in this case too.

We now state a result on vanishing of the multivariate integer
Chebyshev polynomials on the product lattice of algebraic
integers contained in the set.

\begin{theorem} \label{thm1.4}
Suppose that $\Lambda_j,\ j=1,\ldots,d,$ are complete sets of
conjugate algebraic integers, and define
$\Lambda:=\Lambda_1\times\ldots\times \Lambda_d.$ If $\Lambda\subset
E$ for a compact set $E\subset\C^d$ with $t_{\Z}(E)<1,$ then the
integer Chebyshev polynomials for $E$ satisfy $C_n(z)=0,\
z\in\Lambda,$ for all large $n\in\N.$
\end{theorem}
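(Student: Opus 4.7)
The plan is to reduce the statement to one point of $\Lambda$ at a time and apply a Galois-theoretic norm argument. Fix an arbitrary $z_0 \in \Lambda$, and let $K$ be the compositum $\mathbb{Q}\bigl(\bigcup_{j=1}^d \Lambda_j\bigr)$. Each $\mathbb{Q}(\Lambda_j)$ is the splitting field over $\mathbb{Q}$ of the common minimal polynomial of the elements of $\Lambda_j$, hence is Galois over $\mathbb{Q}$; as a compositum of Galois extensions, $K/\mathbb{Q}$ is itself Galois. Because each $\Lambda_j$ is a complete set of conjugates, every $\sigma \in G := \mathrm{Gal}(K/\mathbb{Q})$ permutes $\Lambda_j$, and therefore acts coordinatewise on $\Lambda = \Lambda_1\times\cdots\times\Lambda_d$. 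In particular, $\sigma z_0 \in \Lambda \subset E$ for every $\sigma \in G$.

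The value $C_n(z_0)$ lies in the ring of integers of $K$, since $C_n$ has integer coefficients and each coordinate of $z_0$ is an algebraic integer. Its $K/\mathbb{Q}$-norm
\[
N := N_{K/\mathbb{Q}}\bigl(C_n(z_0)\bigr) = \prod_{\sigma \in G} \sigma\bigl(C_n(z_0)\bigr) = \prod_{\sigma \in G} C_n(\sigma z_0)
\]
is therefore a rational integer, and bounding every factor by $\|C_n\|_E$ yields $|N| \le \|C_n\|_E^{[K:\mathbb{Q}]}$. The hypothesis $t_{\Z}(E)<1$ together with Definition~\ref{def1.1} makes $\|C_n\|_E$ decay exponentially in $n$, so there is some $n_0$, depending only on $\Lambda$ and $E$, with $\|C_n\|_E^{[K:\mathbb{Q}]} < 1$ for every $n\ge n_0$. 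For such $n$ the integer $N$ must vanish; some factor $\sigma\bigl(C_n(z_0)\bigr)$ is then $0$, and because $\sigma$ is a field automorphism this forces $C_n(z_0) = 0$. Varying $z_0$ over the finite set $\Lambda$ gives $C_n \equiv 0$ on $\Lambda$ for every $n \ge n_0$.

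The delicate step is the choice of \emph{which} algebraic integer to take the norm of. A naive product $\prod_{z\in\Lambda} C_n(z)$ is also a rational integer satisfying the same exponential bound, but its vanishing only yields $C_n(z)=0$ for \emph{some} $z\in\Lambda$; boosting this to all of $\Lambda$ by Galois invariance would only propagate the vanishing along a single diagonal orbit of $G$ on $\Lambda$, which need not be transitive. The per-point norm argument sidesteps this obstacle precisely because an automorphism annihilates $x$ iff $x=0$, so the vanishing of $N_{K/\mathbb{Q}}\bigl(C_n(z_0)\bigr)$ translates directly into $C_n(z_0)=0$.
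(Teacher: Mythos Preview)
Your proof is correct and takes a genuinely different route from the paper's. The paper does not pass to a Galois closure or compute a field norm; instead it builds, by symmetrizing over $\Lambda_2\times\cdots\times\Lambda_d$, an auxiliary polynomial $P(z_1)=\sum |C_n(z_1,\lambda_2,\ldots,\lambda_d)|^{2n}$ in $z_1,\bar z_1$ whose coefficients are integers by the fundamental theorem on symmetric functions, and then symmetrizes once more over $\Lambda_1$ to obtain a rational integer that is a sum of nonnegative terms and is eventually $<1$. Vanishing of that sum forces every summand to vanish, hence $C_n\equiv 0$ on $\Lambda$.

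Your argument is cleaner and more conceptual: the single identity $\sigma(C_n(z_0))=C_n(\sigma z_0)$ plus injectivity of field automorphisms replaces the two-stage symmetric-function bookkeeping and the positivity trick. It also makes the uniformity in $z_0$ transparent, since the threshold $n_0$ depends only on $[K:\mathbb{Q}]$ and on when $\|C_n\|_E<1$. The paper's approach, on the other hand, avoids any explicit appeal to Galois theory and stays entirely within elementary symmetric-function manipulations; this is perhaps more self-contained for readers less comfortable with number fields, at the cost of the somewhat ad hoc $|C_n|^{2n}$ device. Your final paragraph, explaining why the naive product $\prod_{z\in\Lambda} C_n(z)$ is insufficient, is a nice diagnostic remark and correctly identifies the obstruction that the diagonal $G$-action on $\Lambda$ need not be transitive.
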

A very interesting problem is how one can find and describe factors of the integer Chebyshev polynomials. Alternatively, one may ask what are the manifolds of zeros in $\C^d$ connecting points of the lattice $\Lambda$ in Theorem \ref{thm1.4}.

Following the univariate case \cite{Gor59,Mo94,BE96,FRS97,Pr1}, it is possible to give a lower bound for the multivariate integer Chebyshev constant by using the leading coefficients of minimal polynomials for algebraic numbers in the set.

\begin{theorem} \label{thm1.5}
Let $E\subset\C^d$ be a compact set. Suppose that $\Lambda_j,\
j=1,\ldots,d,$ are complete sets of conjugate algebraic numbers such
that $\Lambda:=\Lambda_1\times\ldots\times \Lambda_d \subset E.$
Denote the leading coefficient of the minimal polynomial for
$\Lambda_j$ by $a_j\in\Z.$ Assume that for each $j=1,\ldots,d$ there
are infinitely many sets $\Lambda_j(m_j)$, of cardinality
$|\Lambda_j|=m_j\to\infty$, satisfying the above stated conditions, and set
$s_j:=\limsup_{m_j\to\infty} |a_j(m_j)|^{-1/m_j}.$ Then
\[
t_{\Z}(E) \ge \prod_{j=1}^d s_j.
\]
\end{theorem}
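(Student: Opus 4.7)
The plan is to extend the classical univariate resultant argument — which yields $\|Q_n\|_E\ge|a|^{-n/m}$ when $d=1$ — to the multivariate case via iterated elimination of variables. Let $p_j(z_j)\in\Z[z_j]$ be the primitive minimal polynomial of $\Lambda_j$, with $\deg p_j=m_j$ and leading coefficient $a_j$. Fix $n$ and let $P:=C_n$ be the multivariate integer Chebyshev polynomial of total degree at most $n$. I would form the iterated resultants $R_0:=P$ and
$$
R_k:=\mathrm{Res}_{z_k}(R_{k-1},p_k)\in\Z[z_{k+1},\ldots,z_d],\qquad k=1,\ldots,d;
$$
if no $R_k$ vanishes identically then $R_d$ is a nonzero integer, so $|R_d|\ge 1$.

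The key estimate uses the resultant formula
$$
R_k(z_{k+1},\ldots,z_d)\,=\,\pm\, a_k^{\deg_{z_k}R_{k-1}}\prod_{\alpha\in\Lambda_k}R_{k-1}(\alpha,z_{k+1},\ldots,z_d).
$$
For $(z_{k+1},\ldots,z_d)\in\Lambda_{k+1}\times\cdots\times\Lambda_d$, every evaluation point $(\alpha,z_{k+1},\ldots,z_d)$ sits inside $\Lambda\subset E$, so an induction on $k$ produces
$$
|R_k(z_{k+1},\ldots,z_d)|\le\prod_{j=1}^{k}|a_j|^{nM_k/m_j}\cdot\|P\|_E^{M_k},\qquad M_k:=m_1\cdots m_k.
$$
Setting $k=d$ and using $|R_d|\ge 1$, then extracting $M_d$-th roots, yields $\|C_n\|_E\ge\prod_{j=1}^d|a_j|^{-n/m_j}$, and hence $\|C_n\|_E^{1/n}\ge\prod_{j=1}^d|a_j|^{-1/m_j}$.

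The main obstacle is guaranteeing $R_k\not\equiv 0$ at every step. Since $p_k$ is irreducible over $\Q$, the resultant $R_k$ vanishes identically only if $p_k\mid R_{k-1}$ in $\Q(z_{k+1},\ldots,z_d)[z_k]$, which is impossible whenever $\deg_{z_k}R_{k-1}<m_k$. A second induction gives $\deg_{z_j}R_k\le M_k n$ for all $j>k$ (each $\mathrm{Res}_{z_i}(\cdot,p_i)$ inflates the degree in every surviving variable by at most a factor of $m_i$), so non-vanishing follows from $m_k>M_{k-1}n$. This forces a rapidly growing sequential choice $m_1\ll m_2\ll\cdots\ll m_d$ depending on $n$.

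To conclude, fix $\eps>0$. Using that $s_j=\limsup_{m_j\to\infty}|a_j(m_j)|^{-1/m_j}$, for each $j$ there are arbitrarily large admissible $m_j$ with $|a_j(m_j)|^{-1/m_j}\ge s_j-\eps$; selecting the $\Lambda_j$'s sequentially so that both this closeness condition and the size constraint $m_k>M_{k-1}n$ hold is therefore feasible. Then $\|C_n\|_E^{1/n}\ge\prod_{j=1}^d(s_j-\eps)$, and letting $n\to\infty$ and then $\eps\to 0$ gives $t_{\Z}(E)\ge\prod_{j=1}^d s_j$.
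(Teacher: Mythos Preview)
Your proof is correct and follows essentially the same route as the paper's. The paper forms the product $\prod_{(\lambda_1,\ldots,\lambda_d)\in\Lambda} C_n(\lambda_1,\ldots,\lambda_d)$ directly and multiplies by $\prod_j a_j^{\,n\prod_{k\neq j} m_k}$, invoking the fundamental theorem on symmetric functions to obtain an integer; you package exactly the same quantity as an iterated resultant $R_d$, which yields integrality automatically. Both arguments then use the identical degree constraint $m_k>M_{k-1}n$ to guarantee non-vanishing and conclude via the same $\limsup$ passage; your treatment of the non-vanishing step is in fact spelled out in more detail than the paper's.
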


We consider examples of the problem on polydisks, rectangles and
other special sets in the next section.

\section{Special sets}

\subsection{Polydisks}

For $r=(r_1,\ldots,r_d),$ let $D_{r}:=\{(z_1,\ldots,z_d)\in\C^d:
|z_j|\le r_j,\ r_j>0,\ j=1,\ldots,d\}$ be a polydisk in $\C^d,$
centered at the origin. This region is probably the simplest in
terms of the integer Chebyshev problem, because the extremal
polynomials are monomials, and the solution coincides with that of
the regular Chebyshev problem for ${\mathcal P}_n^d({\C})$.

\begin{proposition} \label{prop2.1}
Let $r_m:=\dis\min_{1\le j\le d} r_j,\ 1\le m\le d.$ If $r_m<1$ then
an integer Chebyshev polynomial of degree $n\in\N_0$ on $D_r$ is
$C_n(z)=z_m^n,$ with $t_{\Z}(n,D_r) = r_m^n$ and $t_{\Z}(D_r) =
r_m.$ If $r_m\ge 1$ then $C_n(z)\equiv 1,\ n\in\N_0,$ so that
$t_{\Z}(n,D_r) = t_{\Z}(D_r) = 1.$
\end{proposition}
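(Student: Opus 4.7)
The plan is to prove matching upper and lower bounds for $\|P_n\|_{D_r}$ over $0 \not\equiv P_n \in \mathcal{P}_n^d(\Z)$, with the lower bound coming from multivariable Cauchy coefficient estimates.

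For the upper bound, simply exhibit the competing polynomials: $z_m^n \in \mathcal{P}_n^d(\Z)$ satisfies $\|z_m^n\|_{D_r} = r_m^n$, and $1 \in \mathcal{P}_n^d(\Z)$ satisfies $\|1\|_{D_r} = 1$. So $t_\Z(n,D_r) \le r_m^n$ in the first case and $t_\Z(n,D_r) \le 1$ in the second.

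For the lower bound, take any $P_n(z) = \sum_{|k|\le n} a_k z^k$ in $\mathcal{P}_n^d(\Z)$ with $P_n\not\equiv 0$. Applying the iterated Cauchy integral formula over the distinguished boundary $\{|z_j|=r_j\}_{j=1}^d$ gives, for each multi-index $k$ with $|k|\le n$,
\[
|a_k|\, r_1^{k_1}\cdots r_d^{k_d} \le \|P_n\|_{D_r}.
\]
Since the coefficients are integers and not all zero, there exists some $k^\ast$ with $|a_{k^\ast}|\ge 1$, so $\|P_n\|_{D_r} \ge r^{k^\ast} \ge \min_{|k|\le n} r^{k}$ where $r^k := r_1^{k_1}\cdots r_d^{k_d}$. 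It remains to evaluate this minimum. If $r_m < 1$, then on the monomial $k = (0,\dots,0,n,0,\dots,0)$ (with $n$ in the $m$-th slot) we get $r^k = r_m^n$, and for any other admissible $k$ we can write $r^k = r_m^{|k|} \prod_{j\ne m} (r_j/r_m)^{k_j}$ with each factor $r_j/r_m \ge 1$, hence $r^k \ge r_m^{|k|} \ge r_m^n$. Thus the minimum is $r_m^n$. If $r_m \ge 1$, then every $r_j \ge 1$ so $r^k \ge 1$ for all $k$, with equality at $k=0$.

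Combining the two bounds yields $t_\Z(n,D_r) = r_m^n$ (with $C_n = z_m^n$) when $r_m < 1$, and $t_\Z(n,D_r) = 1$ (with $C_n \equiv 1$) when $r_m \ge 1$. Taking $n$-th roots and letting $n\to\infty$ gives the stated values of $t_\Z(D_r)$. There is no real obstacle here; the only point requiring any thought is the combinatorial minimization of $r^k$ over $|k|\le n$, which reduces to observing that one should concentrate all the degree on the smallest $r_j$ when $r_m<1$, and on the constant term otherwise.
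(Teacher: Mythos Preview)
Your proof is correct and follows essentially the same strategy as the paper's: both establish the lower bound $\|P_n\|_{D_r}\ge |a_k|\,r^k$ for a nonzero integer coefficient and then minimize $r^k$ over admissible multi-indices. The only cosmetic difference is that the paper cites Proposition~4 of \cite{BC00} for this coefficient inequality (applied to the leading monomial), whereas you derive it directly from the iterated Cauchy estimate on the distinguished boundary, which makes your argument slightly more self-contained.
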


\subsection{Rectangles}

Let $E=[a_1,b_1]\times\ldots\times[a_d,b_d]\subset\R^d$ be a (real) rectangle with faces parallel to the coordinate planes. Proposition \ref{prop1.3} and Theorem \ref{thm1.5} give the upper and the lower bounds for $t_{\Z}(E)$ in terms of one-dimensional bounds for the integer Chebyshev constant of the intervals $[a_j,b_j].$ However, it is of great interest to investigate the problem from a multivariate point of view, and determine the shape of the multivariate integer Chebyshev polynomials. We restrict our discussion to the case $d=2.$

Consider $E=[a,b]\times[c,d],$ where $a,b,c,d\in\R.$ Suppose that $\ell:[a,b]\to[c,d]$ is
a linear function with integer coefficients, i.e.,
$y=\ell(x)\in{\mathcal P}_1^1({\Z}).$ Set $F=\{(x,\ell(x)): x\in[a,b]\}.$ Let $C_n^E$ and $C_n^F$ be the integer Chebyshev polynomials of degree $n$ for $E$ and $F.$ If $C_n^E\vert_F\not\equiv 0$ then
\[
t_{\Z}(n,E) = \|C_n^E\|_E \ge \|C_n^E\|_F \ge \|C_n^F\|_F = t_{\Z}(n,F).
\]
But $C_n^F\vert_F=C_n^F(x,\ell(x))\in{\mathcal P}_n^1({\Z})$ and
\[
\|C_n^F\|_F = \|C_n^F\circ\ell\|_{[a,b]} = \|C_n^F\circ\ell\|_E \ge
t_{\Z}(n,E).
\]
Hence $t_{\Z}(n,E)=t_{\Z}(n,F)$, $C_n^E(x,y) = C_n^F(x,y)$ and
$C_n^E(x,\ell(x)) = Q_n(x)$, where $Q_n$ is a univariate integer
Chebyshev polynomial for $[a,b].$

For example, consider the square $E=[0,1]\times[0,1]$ and $y=\ell(x)=1-x.$ Numerical computations suggested the polynomial
$C_5(x,y)=xy(y-1)(x-1)(x-y)$ \cite{Me}. It does not vanish on
$F=\{(x,\ell(x)): x\in[0,1]\}$, and $C_5(x,1-x)=x^2(1-x)^2(2x-1)=Q_5(x)$, where $Q_5$ is an
integer Chebyshev polynomial for $[0,1]$ (cf. \cite{HS97}). Since
$t_{\Z}(E) > t_{\Z}(F),$ we conclude that $C_n\vert_F\equiv 0$ for
large $n.$ In fact, the numerical computation of $C_6$ through $C_9$ show that they have the factor $1-x-y$ \cite{Me}. As a consequence of Proposition \ref{prop1.3} and Theorem \ref{thm1.5}, together with \cite{Pr1} and \cite{Ch83,Mo94}, we state the bounds
\[
(0.4207)^2 < t_{\Z}([0,1]\times[0,1]) \le t_{\Z}([0,1]) < 0.4232.
\]
We plan a more detailed study of the integer Chebyshev problem for the square $[0,1]\times[0,1]$ in a forthcoming paper.

\subsection{Polylemniscates}

We consider polynomial mappings $q=(q_1,\ldots,q_d):\C^d\to\C^d,\
d\ge 2,$ such that $q_j\in{\mathcal P}_l^d({\Z})$ with $\deg(q_j)=l,\ j=1,\ldots,d.$ Furthermore, we
assume that the homogeneous parts $\hat q_j$ of degree $l$ in $q_j$
satisfy
\[
\bigcap_{j=1}^d \hat q_j^{-1}(0) = 0.
\]
The latter condition is equivalent to
\[
\liminf_{|z|\to\infty} \frac{|q(z)|}{|z|^l} > 0,
\]
where $|\cdot|$ is the Euclidean norm on $\C^d$, see Theorem 5.3.1
of \cite{Kl91}. A polynomial mapping $q$ of degree $l$ is called
simple if $\hat q_j(z)=z_j^l,\ j=1,\ldots,d.$

\begin{proposition} \label{prop2.3}
Let $q=(q_1,\ldots,q_d):\C^d\to\C^d,\ d\ge 2,$ be a simple polynomial mapping such that $q_j\in{\mathcal P}_l^d({\Z}),\ j=1,\ldots,d.$ For a polydisk $D_{r}$, $r=(r_1,\ldots,r_d),$ define the (filled-in) polylemniscate $L_r(q):=q^{-1}(D_r)$. If
$r_m=\dis\min_{1\le j\le d} r_j<1$ then an integer Chebyshev
polynomial of degree $l n$ on $L_r(q)$ is $C_{ln}(z)=q_m^n,$ with
$t_{\Z}(ln,L_r(q)) = r_m^n$ and $t_{\Z}(L_r(q)) = r_m^{1/l}.$
\end{proposition}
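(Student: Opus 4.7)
My plan is to determine $t_{\Z}(ln, L_r(q))$ by matching upper and lower bounds at $r_m^n$. The upper bound is immediate: $q_m^n \in \mathcal{P}_{ln}^d(\Z)$ has degree $ln$ (since $\deg q_m = l$) and $|q_m(z)| \le r_m$ for every $z \in L_r(q) = q^{-1}(D_r)$, so $\|q_m^n\|_{L_r(q)} \le r_m^n$ and hence $t_{\Z}(ln, L_r(q)) \le r_m^n$.

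For the matching lower bound I would take an arbitrary nonzero $P \in \mathcal{P}_{ln}^d(\Z)$ and push it down to $D_r$ via $q$ by forming the algebraic norm
\[
F(w) := \prod_{z \in q^{-1}(w)} P(z),
\]
preimages counted with multiplicity so that the product always has $l^d$ factors. This is the determinant of multiplication by $P$ on the finitely generated $\Z[w]$-algebra $\Z[z]$ (where $w_j \mapsto q_j(z)$), so elimination theory / symmetric-function reasoning places $F$ in $\Z[w]$. Three properties of $F$ are then needed: (i) $F \not\equiv 0$, which follows because $\Z[z]$ is a domain and multiplication by a nonzero $P$ is injective (equivalently, if $F \equiv 0$ then $q(\{P = 0\}) = \C^d$, contradicting finiteness of $q$ and $\dim \{P = 0\} \le d-1$); (ii) $\deg F \le n l^d$, which comes from the simplicity hypothesis $\hat{q}_j = z_j^l$, forcing $|z| = O(|w|^{1/l})$ on $q^{-1}(w)$ as $|w| \to \infty$ and therefore $|F(w)| = O(|w|^{n l^d})$; and (iii) the pointwise bound $|F(w)| \le (\max_{z \in q^{-1}(w)} |P(z)|)^{l^d}$, which, combined with $q^{-1}(D_r) = L_r(q)$, gives $\|F\|_{D_r} \le \|P\|_{L_r(q)}^{l^d}$.

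With these in hand, Proposition \ref{prop2.1} applied to $D_r$ (which satisfies $r_m < 1$) yields $\|F\|_{D_r} \ge t_{\Z}(n l^d, D_r) = r_m^{n l^d}$, so together with (iii) one has $\|P\|_{L_r(q)} \ge r_m^n$, matching the upper bound. This proves both $t_{\Z}(ln, L_r(q)) = r_m^n$ and the extremality of $q_m^n$. To extract $t_{\Z}(L_r(q)) = r_m^{1/l}$ I would sandwich intermediate degrees: for any $k$ with $n = \lfloor k/l \rfloor$, the monotonicity of $\|C_k\|_{L_r(q)}$ in $k$ gives $r_m^{n+1} \le \|C_k\|_{L_r(q)} \le r_m^n$, and both $r_m^{(n+1)/k}$ and $r_m^{n/k}$ tend to $r_m^{1/l}$ as $k \to \infty$. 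The main care point I expect is the verification that $F \in \Z[w]$ with $\deg F \le n l^d$: this is where the simplicity and integrality hypotheses on $q$ are really used, and it is the one step that would need to be spelled out rather than waved through.
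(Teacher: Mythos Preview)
Your argument is correct and essentially self-contained, whereas the paper's proof is a two-line citation: it observes that $z_m^n$ is the (integer) Chebyshev polynomial for $D_r$ by Proposition~\ref{prop2.1} and then invokes Theorem~4 of Bloom--Calvi \cite{BC00}, which transfers Chebyshev polynomials along simple polynomial maps, to conclude that $q_m^n = z_m^n \circ q$ is extremal on $L_r(q)$. Your norm construction $F(w)=\prod_{z\in q^{-1}(w)} P(z)$ is precisely the mechanism underlying that transfer theorem, so you are in effect reproving the needed special case directly rather than quoting it. The trade-off is clear: the paper's route is shorter and places the technical burden (that $F\in\Z[w]$ with $\deg F\le nl^d$) inside the cited reference, while your route makes the argument transparent and keeps everything within the paper, at the cost of having to justify carefully the integrality and degree bound for $F$---exactly the point you flag. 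Your sandwich argument for the limit $t_{\Z}(L_r(q))=r_m^{1/l}$ via monotonicity of $t_{\Z}(k,\cdot)$ in $k$ is also a small addition beyond what the paper writes out explicitly.
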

Note that the solutions of the integer and the regular Chebyshev problems coincide in Proposition \ref{prop2.3}. It would be of interest to find examples of sets in $\C^d,\ d>1,$ where the integer Chebyshev constants and polynomials are known explicitly, and they differ from the regular Chebyshev case. When $d=1$, such examples of lemniscates for univariate polynomials were found in \cite[Theorem 1.5]{Pr1}.

\section{A generalization of the Hilbert-Fekete upper bound}

Recall that the space of polynomials ${\mathcal P}_n^d({\C})$ of
degree at most $n$ in $d$ complex variables has the dimension
$h_n:=\binom{d+n}{n}$, which corresponds to the count of monomials
$z^k=z_1^{k_1}\ldots z_d^{k_d}$ or multi-indices
$k=(k_1,\ldots,k_d)\in \N_0^d$ with $|k|=\sum_{j=1}^d k_j \le n.$ We
arrange all multi-indices in the increasing sequence $\{k^{(i)}\},$ by following the standard lexicographic order. This order means that
$k^{(i)} \prec k^{(i+1)}$ for the multi-indices $k^{(i)}$ and $k^{(i+1)}$ if either $|k^{(i)}|\le |k^{(i+1)}|$ or $|k^{(i)}|=|k^{(i+1)}|$ and the first non-zero entry of $k^{(i)}-k^{(i+1)}$ is negative.

Given a set of points $z_i\in\C^d,\ i=1,\ldots,h_n$, we define the
Vandermonde determinant by
\[
V(z_1,\ldots,z_{h_n}):=\det(z_i^{k^{(j)}})_{i,j=1}^{h_n}.
\]
When $d=1$ and $z_i\in\C,\ i=1,\ldots,n+1$, it is well known that
\[
V(z_1,\ldots,z_{n+1})=\prod_{1\le i<j\le n+1} (z_i-z_j).
\]
However, no simple factorization formula is available for $d\ge 2$.

For a compact set $E\subset\C^d$ and $n\in\N$, define an $n$th set
of Fekete points $\{\zeta_i\}_{i=1}^{h_n}\subset E$ as maximizers
for the Vandermonde determinant:
\[
|V(\zeta_1,\ldots,\zeta_{h_n})| = \max_{\{z_i\}_{i=1}^{h_n}\subset
E} |V(z_1,\ldots,z_{h_n})|.
\]
All Fekete points change with $n$ in general, but we avoid double
indices to simplify the notation. Note that the degree of
$V(z_1,\ldots,z_{h_n})$ as a multivariate polynomial is equal to
$l_n:=d\binom{d+n}{d+1}$. Let
\[
t_{\C}(E):= \lim_{n\to\infty}
|V(\zeta_1,\ldots,\zeta_{h_n})|^{1/l_n}
\]
be the multivariate transfinite diameter of $E$ in $\C^d$. In the
univariate case, the sequence under the limit is increasing, which
immediately implies existence of the limit. Furthermore, the
transfinite diameter of $E\subset\C$ is equal to the Chebyshev
constant of $E$ and to the logarithmic capacity of $E$, as we
already mentioned is Section 1 (cf. \cite{Ra95}). The multivariate
case is much more delicate, and the existence of the defining limit
for $t_{\C}(E)$ in the general setting was established much later,
see \cite{Za75} and \cite{BC99}.

We state the following generalization of the Hilbert-Fekete estimate \eqref{1.10}.

\begin{theorem} \label{thm3.1}
For any compact set $E\subset\C^d$ that is invariant under complex
conjugation in each coordinate variable, we have
\begin{equation} \label{3.1}
t_{\Z}(E) \leq (t_{\C}(E))^{d/(d+1)}.
\end{equation}
\end{theorem}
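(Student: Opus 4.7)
The plan is to construct a small-norm rational-integer polynomial on $E$ in two stages: first via Minkowski's lattice theorem applied to \emph{Gaussian}-integer polynomials evaluated at Fekete points, then by forming the product $PP^{*}$ (with $^{*}$ conjugating coefficients) to recover rational-integer coefficients. Set $N:=h_n$ and $V_n:=|V(\zeta_1,\ldots,\zeta_N)|$ for a Fekete $N$-tuple in $E$; we may assume $V_n>0$, else $t_{\C}(E)=0$ and the bound is trivial.

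Consider the $\C$-linear evaluation map $\Phi:\cP_n^d(\C)\to\C^N$, $P\mapsto(P(\zeta_j))_{j=1}^N$. In the monomial basis its matrix is the Vandermonde with $|\det_{\C}\Phi|=V_n$; viewed as an $\R$-linear map on $\C^N\cong\R^{2N}$, its determinant has absolute value $|\det_{\C}\Phi|^2=V_n^2$. Hence the image $\Lambda:=\Phi(\Z[i][z_1,\ldots,z_d]_{\le n})$ of the Gaussian-integer polynomial lattice $\Z[i]^N\cong\Z^{2N}$ is a rank-$2N$ lattice of covolume $V_n^2$ in $\R^{2N}$. Minkowski's theorem applied to the cube $[-\epsilon,\epsilon]^{2N}$ with $\epsilon=V_n^{1/N}$ produces a nonzero Gaussian-integer polynomial $P$ of degree $\le n$ with $|P(\zeta_j)|\le\sqrt{2}\,V_n^{1/N}$ for every $j$.

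Fekete maximality forces $\|L_j\|_E\le 1$ for the Lagrange basis $L_j(z):=V(\zeta_1,\ldots,z,\ldots,\zeta_N)/V(\zeta_1,\ldots,\zeta_N)$ (with $z$ in slot $j$), and Fekete points are unisolvent for $\cP_n^d$, so interpolation yields $\|P\|_E\le\sum_j|P(\zeta_j)|\le\sqrt{2}\,N\,V_n^{1/N}$. Now define $P^{*}(z):=\sum_k\overline{\alpha_k}\,z^k$ when $P(z)=\sum_k\alpha_k z^k$, and set $Q:=PP^{*}$. Pairing the $(k,k')$ and $(k',k)$ cross-terms in the expansion shows the imaginary parts cancel, so $Q$ has rational integer coefficients; it is nonzero and of total degree $\le 2n$. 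Because $E$ is invariant under coordinatewise conjugation, $|P^{*}(z)|=|P(\bar z)|\le\|P\|_E$ on $E$, whence $\|Q\|_E\le\|P\|_E^2\le 2N^2V_n^{2/N}$.

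Taking $(2n)$-th roots,
\[
t_{\Z}(2n,E)^{1/(2n)}\le\|Q\|_E^{1/(2n)}\le(2N^2)^{1/(2n)}\,V_n^{1/(nN)}.
\]
The ratio identity $l_n/h_n=dn/(d+1)$, immediate from $\binom{d+n}{d+1}/\binom{d+n}{d}=n/(d+1)$, gives $l_n/(nN)=d/(d+1)$; combined with $V_n^{1/l_n}\to t_{\C}(E)$ from \cite{Za75, BC99}, the right side tends to $t_{\C}(E)^{d/(d+1)}$. Fekete subadditivity $t_{\Z}(m+n,E)\le t_{\Z}(m,E)\,t_{\Z}(n,E)$ (from multiplying integer Chebyshev polynomials) guarantees that $t_{\Z}(n,E)^{1/n}$ converges, so the even-subsequence limit is the full limit, yielding $t_{\Z}(E)\le t_{\C}(E)^{d/(d+1)}$. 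The main delicate step I expect is the determinant accounting $|\det_{\R}\Phi|=|\det_{\C}\Phi|^2$ together with the verification that $PP^{*}\in\Z[z_1,\ldots,z_d]$ — both are essentially formal but are the places where the conjugation symmetry of $E$ is genuinely used and the correct covolume is pinned down.
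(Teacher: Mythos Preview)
Your argument is correct, and in fact it proves more than the paper does. The paper establishes \eqref{3.1} only for $E\subset\R^d$, explicitly deferring the general conjugation-invariant case to ``more substantial machinery of pluripotential theory'' to appear elsewhere. You handle the general case directly and elementarily.

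The skeleton of both arguments is identical: Fekete points, Lagrange interpolation with $\|L_j\|_E\le 1$, Minkowski's theorem to force small values at the Fekete points, and the identity $l_n/(n h_n)=d/(d+1)$. The difference is in how Minkowski is invoked. For $E\subset\R^d$ the Fekete points are real, so the evaluations $P(\zeta_j)$ are $h_n$ real linear forms in the rational-integer coefficients; Minkowski applies in $\R^{h_n}$ with determinant $V_n$ and produces a polynomial in ${\mathcal P}_n^d(\Z)$ directly. You instead work over $\Z[i]$, realize the evaluation map on $\R^{2h_n}$ with determinant $V_n^{2}$, obtain a Gaussian-integer polynomial $P$, and then pass to $Q=PP^{*}\in{\mathcal P}_{2n}^d(\Z)$, using the conjugation symmetry of $E$ only to bound $\|P^{*}\|_E$ via $|P^{*}(z)|=|P(\bar z)|$. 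The degree doubles and a $\sqrt{2}$ appears, but neither survives the limit. This $PP^{*}$ device is a clean way to avoid any pluripotential input.

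One small point: the case $V_n=0$ (equivalently $t_{\C}(E)=0$) is not literally ``trivial,'' since the conclusion then reads $t_{\Z}(E)=0$, which still needs justification. The paper handles it by approximating with $E_m:=E\cup\{z:|z_j|\le 1/m,\ j=1,\ldots,d\}$, applying the non-pluripolar case to $E_m$, and using $t_{\C}(E_m)\to 0$ together with monotonicity of $t_{\Z}$. The same two-line argument closes your proof as well.
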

Clearly, if $d=1$ then \eqref{3.1} yields  $t_{\Z}(E) \leq \sqrt{t_{\C}(E)}$, which is the Hilbert-Fekete inequality \eqref{1.10}. We prove Theorem \ref{thm3.1} for $E\subset\R^d$ here, to avoid a substantially more technical excursion into pluripotential theory.

\section{Proofs}

\begin{proof}[Proof of Proposition \ref{prop1.2}]

(i) Suppose that $C_n^E$ and $C_n^F$ are arbitrary integer Chebyshev
polynomials of degree $n$ for $E$ and $F.$ It follows from
Definition \ref{def1.1} that
\[
\|C_n^E\|_E \le \|C_n^F\|_E \le \|C_n^F\|_F.
\]
Hence part (i) is an immediate consequence of the same definition.

(ii) Consider $\eps>0$ and choose $n$ such that $\norm{C_n}_E^{1/n}
\le t_{\Z}(E)+\eps/2.$ Since $E$ is compact, there is a closed ball $B_R\subset\C^d$ of sufficiently large radius $R>0$ that contains $E$ strictly inside. Hence $E\subset H:=\{z\in\C^d:|C_n(z)|^{1/n} \le t_{\Z}(E)+\eps/2\} \bigcap B_R.$ On the other hand, $H\subset W:=\{z\in\C^d: |C_n(z)|^{1/n} \le t_{\Z}(E)+\eps\}\bigcap B_{2R}.$ Furthermore, the boundary of $W$ is disjoint from $H$ by the maximum modulus principle applied to $C_n$, so that we can set $\delta:=\textup{dist}(H,\partial W) =\dis \min_{z\in H, w\in \partial W} |z-w| >0.$ Hence $E_{\delta}\subset W$ and
$$ t_{\Z}(E_{\delta}) \le t_{\Z}(W) \le t_{\Z}(E)+\eps$$
by (i), where the last inequality follows by considering a sequence of polynomials $(C_n)^m,\ m\in\N,$ on $W.$ The lower bound in (ii) is an immediate consequence of (i).

\end{proof}

\begin{proof}[Proof of Proposition \ref{prop1.3}]

If we consider a univariate integer Chebyshev polynomial $Q_n$ for a
set $E_j$, then
\[
t_{\Z}(n,E)=\|C_n\|_E \le \|Q_n\|_E=\|Q_n\|_{E_j},
\]
where $C_n$ is a multivariate integer Chebyshev polynomial for $E$.
After extracting the $n$th root and passing to the limit, we obtain
that $t_{\Z}(E)\le t_{\Z}(E_j),\ j=1,\ldots,n.$

Suppose now that $t_{\C}(E_j)\ge 1,\ j=1,\ldots,d.$ Since the
Euclidean diameter diam$(E_j)\ge 2\, t_{\C}(E_j) \ge 2$ (cf.
\cite[Theorem 5.3.4]{Ra95}), we can find a point $\zeta_j\in E_j$ such that $|\zeta_j|\ge 1,\ j=1,\ldots,d.$ Substituting these values
$z_j=\zeta_j,\ j=2,\ldots,d,$ into a multivariate integer Chebyshev
polynomial $C_n$, we obtain a polynomial in one variable $z_1$ with
a leading coefficient of the form $ a \prod_{j=2}^d \zeta_j^{n_j},$
where $a$ is a nonzero integer. Note that this coefficient is at
least one in absolute value. Recall that for any monic univariate
polynomial $P_k(z_1)$ of degree $k$, one has $\|P_k\|_{E_1} \ge
(t_{\C}(E_1))^k \ge 1,$ see \cite[Theorem 5.5.4]{Ra95}. It follows that $\|C_n\|_E \ge 1.$ Since $C_n$ is an arbitrary multivariate integer Chebyshev polynomial, we obtain that $t_{\Z}(n,E) \ge 1$ and $t_{\Z}(E)\ge 1.$ Hence we can take $C_n(z) \equiv 1,$ so that $t_{\Z}(n,E) = 1$ and $t_{\Z}(E) = 1.$

\end{proof}

\begin{proof}[Proof of Theorem \ref{thm1.4}]

Suppose that $\Lambda_j=\{\lambda_{j,k}\}_{k=1}^{m_j},\
j=1,\ldots,d.$ Since non-real algebraic integers in $\Lambda_j$ come
in complex conjugate pairs, each $\Lambda_j$ is invariant under
complex conjugation, which works as a permutation of $\lambda_{j,k}.$ For any integer Chebyshev polynomial $C_n$ on $E$, consider
\[
P(z_1):=\sum_{(\lambda_2,\ldots,\lambda_d)\in\Lambda_2\times\ldots\times\Lambda_d}
|C_n(z_1,\lambda_2,\lambda_3,\ldots,\lambda_d)|^{2n},
\]
which is a polynomial in $z_1$ and $\bar z_1$ because $|C_n|^2 = C_n
\overline C_n.$ Since $C_n$ has integer coefficients, the
coefficient of $P(z_1)$ near each term $z_1^l\bar z_1^m$ is a
symmetric function of $\lambda_{j,k}\in\Lambda_j$ with integer
coefficients, for each $j=2,\ldots,d.$ Therefore, these coefficients of $P(z_1)$ are integers. From $t_{\Z}(E)<1,$ we have that $\|C_n\|_E<1$ for all sufficiently large $n\in\N.$ It also follows that $\|P\|_{E_1}<1$ for all sufficiently large $n\in\N,$ where $E_1:=\{z_1:z=(z_1,\ldots,z_d)\in E\}$ is the projection of $E$ onto the coordinate plane of $z_1.$  Hence we have for the sum
\[
\sum_{k=1}^{m_1} |P(\lambda_{1,k})|^{2n} < 1,
\]
for all sufficiently large $n\in\N.$ But the latter sum is an integer as a symmetric function in $\lambda_{1,k}$, and therefore must vanish. This means each $\lambda_{1,k}$ is a root of $P$ for $k=1,\ldots,m_1,$ so that all terms in the sum defining $P$ must vanish on the lattice $\Lambda.$

Note that if the cardinality of $\Lambda_k$ can be arbitrarily large for a certain $k$, then $C_n$ must vanish for all values of the variable $z_k$ when other variables $z_j$ are assigned values from $\Lambda_j.$ Indeed, in this case the univariate polynomial $C_n(\lambda_1,\ldots,z_k,\ldots,\lambda_d)$ in $z_k$ vanishes on $\Lambda_k$, where sets $\Lambda_k$ have an accumulation point as $|\Lambda_k|\to\infty.$

\end{proof}

\begin{proof}[Proof of Theorem \ref{thm1.5}]

For simplicity, we first assume that $E\subset\C^2$ and $C_n(z_1,z_2)$ is an integer Chebyshev polynomial for $E$. Consider the univariate polynomial
\[
P(z_1):=a_2^n \prod_{\lambda_2\in\Lambda_2} C_n(z_1,\lambda_2).
\]
The coefficients of $P(z_1)$ are symmetric functions of algebraic
numbers $\lambda_2\in\Lambda_2$, and are integers because of the
factor $a_2^n.$ Furthermore,  the number
\[
N:=a_1^{m_2 n} \prod_{\lambda_1\in\Lambda_1} P(\lambda_1) = a_1^{m_2
n} a_2^{m_1 n} \prod_{\lambda_1\in\Lambda_1,\lambda_2\in\Lambda_2}
C_n(\lambda_1,\lambda_2)
\]
is an integer by applying the fundamental theorem on symmetric forms
in a similar way. This integer cannot be zero for $m_1>m_2 n,$ since
$P$ has fixed degree at most $m_2 n$. Thus $|N|\ge 1$, and
\[
|a_1|^{m_2 n} |a_2|^{m_1 n} \|C_n\|_E^{m_1m_2} \ge |N| \ge 1.
\]
Thus the result follows after taking the power $1/(m_1 m_2 n)$ and
passing to $\limsup$'s.

In the general case $d\ge 2$, one observes that
\[
N:=\prod_{j=1}^d a_j^{n \prod_{k\neq j} m_k} \prod_{(\lambda_1,\ldots,\lambda_d)\in\Lambda_1\times\ldots\times\Lambda_d}
C_n(\lambda_1,\ldots,\lambda_d)
\]
is a nonzero integer, so that
\[
\prod_{j=1}^d |a_j|^{n \prod_{k\neq j} m_k} \|C_n\|_E^{\prod_{k=1}^d m_k} \ge |N| \ge 1.
\]

\end{proof}

\begin{proof}[Proof of Proposition \ref{prop2.1}]

Let $z^k,\ |k|\le n,$ be the leading monomial of an integer
polynomial $P_n\in{\mathcal P}_n^d({\Z})$, with the corresponding
leading coefficient $a_k\in\Z.$ It follows from Proposition 4 of
\cite[p. 428]{BC00} that
\[
\|P_n\|_{D_r} \ge |a_k| \|z^k\|_{D_r} \ge r_m^{|k|}.
\]
If $r_m<1$ then the smallest possible value for the norm is clearly attained
by the monomial $C_n(z)=z_m^n,$ so that $t_{\Z}(n,D_r) = r_m^n$ and
$t_{\Z}(D_r) = r_m.$ If $r_m\ge 1$ then $C_n(z)\equiv 1,\ n\in\N_0,$
because any other polynomial with integer coefficients has the norm
at least equal to 1 by the above estimate. Hence $t_{\Z}(n,D_r) =
t_{\Z}(D_r) = 1.$

\end{proof}

\begin{proof}[Proof of Proposition \ref{prop2.3}]

Note that $C_n(z)=z_m^n$ is both a Chebyshev and an integer Chebyshev polynomial for the polydisk $D_r$ by Proposition \ref{prop2.1}. Applying Theorem 4 of \cite{BC00}, we conclude that $C_n\circ q = q_m^n$ is both a Chebyshev and an integer Chebyshev polynomial for the polylemniscate $L_r(q).$

\end{proof}

\begin{proof}[Proof of Theorem \ref{thm3.1}]

We give a proof for $E\subset\R^d$ here. A proof of the general case involves more substantial machinery of pluripotential theory, and will be published  separately.

Suppose first that $E$ is not pluripolar in $\C^d.$ Then the
Vandermonde determinant of the Fekete points for $E$  does not
vanish, i.e., $V(\zeta_1,\ldots,\zeta_{h_n})\neq 0$ for any $n\in\N$
(see \cite{Kl91,BBCL}). We define the fundamental Lagrange interpolation polynomials in Fekete points by
\[
l_j^{(n)}(z):=\frac{V(\zeta_1,\ldots,z,\ldots,\zeta_{h_n})}
{V(\zeta_1,\ldots,\zeta_j,\ldots,\zeta_{h_n})}, \quad
j=1,\ldots,h_n,
\]
where the variable $z$ replaces $\zeta_j$ in the numerator. It is
clear from this construction that $l_j^{(n)}(\zeta_j)=1$ and
$l_j^{(n)}(\zeta_i)=0$ for $i\neq j.$ Next we express a polynomial
$P_n\in{\mathcal P}_n^d({\Z})$ as
\[
P_n(z) = \sum_{j=1}^{h_n} P_n(\zeta_j) l_j^{(n)}(z),
\]
by the Lagrange interpolation formula. Since
\[
|V(\zeta_1,\ldots,z,\ldots,\zeta_{h_n})| \le
|V(\zeta_1,\ldots,\zeta_j,\ldots,\zeta_{h_n})|, \quad z\in E,
\]
by the defining property of Fekete points, we obtain that
\[
\|l_j^{(n)}\|_E \le 1, \quad j=1,\ldots,h_n.
\]
It follows at once that
\[
\norm{P_n}_E \le \sum_{j=1}^{h_n} |P_n(\zeta_j)| \le h_n\,\max_{1\le
j\le h_n} |P_n(\zeta_j)|.
\]
Observe that
\[
f_j:=P_n(\zeta_j)=\sum_{|k|\le n} a_k \zeta_j^k, \quad
j=1,\ldots,h_n,
\]
are linear forms in $a_k$'s, with real coefficients. Applying
Minkowski's theorem (see \cite[p. 73]{Ca97}), we conclude that there
exists a set of integers $\{a_k\}_{|k|\le n}$, not all zero, such
that
\[
|f_j| \le
|\det(\zeta_i^{k^{(j)}})_{i,j=1}^{h_n}|^{1/h_n}=|V(\zeta_1,\ldots,\zeta_{h_n})|^{1/h_n}.
\]
Thus we can find a sequence of polynomials $P_n(z)=\sum_{|k|\le n}^n a_k z^k \not\equiv 0$ with integer coefficients, satisfying
\[
\norm{P_n}_E \le h_n\,|V(\zeta_1,\ldots,\zeta_{h_n})|^{1/h_n},\quad
n\in\N.
\]
Note that $\dis\lim_{n\to\infty} h_n^{1/n}=1$ and that
\[
\frac{l_n}{n\,h_n} = \frac{d}{n} \binom{d+n}{d+1}
\binom{d+n}{n}^{-1} = \frac{d}{d+1}.
\]
Hence we have that
\[
\norm{P_n}_E^{1/n} \le h_n^{1/n}
\left(|V(\zeta_1,\ldots,\zeta_{h_n})|^{1/l_n}\right)^{d/(d+1)},
\]
and \eqref{3.1} follows by passing to the limit as $n\to\infty.$

If $E$ is pluripolar in $\C^d$, then we consider the compact sets $E_m:=E\bigcup\{z=(z_1,\ldots,z_d):|z_j|\le 1/m,\ j=1,\ldots,d\}.$
Clearly, each $E_m,\ m\in\N,$ is not pluripolar \cite{Kl91}, and
$\lim_{m\to\infty} t_{\C}(E_m) = t_{\C}(E) = 0,$ see \cite[p. 287]{BC99} and \cite{Kl91}. Hence the first part of the proof and Proposition \ref{prop1.2} (i) give that
\[
t_{\Z}(E) \le t_{\Z}(E_m) \le \left(t_{\C}(E_m)\right)^{d/(d+1)} \to 0, \quad \mbox{as } m\to\infty.
\]
It follows that $t_{\Z}(E)=0$, and \eqref{3.1} is trivially satisfied. Note also that $E$ is pluripolar if and only if $t_{\C}(E)=0$, cf. \cite[p. 287]{BC99}. Thus $t_{\Z}(E)=t_{\C}(E)=0$ in this case.

\end{proof}


\end{document}